\theoremstyle{plain}
\newtheorem{thm}{Theorem}[section]
\newtheorem{prop}[thm]{Proposition}
\newtheorem{lem}[thm]{Lemma}
\newtheorem{cor}[thm]{Corollary}
\theoremstyle{definition}
\newtheorem{exple}[thm]{Example}
\newtheorem{nota}[thm]{Notation}
\theoremstyle{remark}
\newtheorem{rmk}[thm]{Remark}
\newcommand{\cro}{\{\:,\:\}}
\newcommand{\prs}{\langle\;,\,\rangle}
\newcommand{\anch}{{\pi_\sharp}}
\newcommand{\coor}{(x_1,\ldots,x_d)}
\newcommand{\efd}{\Omega^{*}(M)}
\newcommand{\ef}{\Omega^{1}(M)}
\newcommand{\ecv}{\mathfrak{X}^{1}(M)}
\newcommand{\efl}{\mathcal{C}^{\infty}(M)}
\newcommand{\cinf}{\mathcal{C}^{\infty}}
\def\to{\rightarrow}
\def\D{{\cal D}}
\def\F{{\cal F}}
\def\g{{\frak g}}
\def\H{{\cal H}}
\def\Li{{\cal L}}
\def\M{{\cal M}}
\def\S{{\cal S}}
\def\T{{\bf T}}
\def\R{{\mathbb R}}
\def\ad{{\rm ad}}
\def\img{{\rm Im\,}}
\def\Ker{{\rm Ker\,}}
\def \nind {\noindent}
\def \sskp {\smallskip}
\def \ms {\medskip}
\def \bs {\bigskip}
\begin{document}\selectlanguage{english}

\begin{frontmatter}



\title{On the local structure of noncommutative deformations}


\author{Mohamed Boucetta}
\ead{mboucetta2@yahoo.fr}

\author{Zouhair Saassai}
\ead{z.saassai@gmail.com}

\address{Université Cadi Ayyad, Faculté des Sciences et Techniques Gueliz, BP 549, 40000 Marrakech, Morocco}

\begin{abstract}
Let $(M,\pi,\mathcal{D})$ be a Poisson manifold endowed with a flat, torsion-free contravariant connection. We show that if $\mathcal{D}$
is an $\mathcal{F}$-connection then there exists a tensor $\mathbf{T}$ such that $\mathcal{D}\mathbf{T}$ is the metacurvature tensor introduced by E. Hawkins
in his work on noncommutative deformations. We compute $\mathbf{T}$ and the metacurvature tensor in this case, and show that if $\mathbf{T}=0$ then, near any regular
 point, $\pi$ and $\mathcal{D}$ are defined in a natural way by a Lie algebra action and a solution of the classical Yang-Baxter equation. Moreover,
when $\mathcal{D}$ is the contravariant Levi-Civita connection associated to $\pi$ and a Riemannian metric, the Lie algebra action preserves the metric.
\end{abstract}

\begin{keyword}
Contravariant connections  \sep  Metacurvature \sep Noncommutative deformations.

\MSC  53D17 \sep Primary 58B34.
\end{keyword}

\end{frontmatter}


\section{Introduction and main result}
\label{}
In \cite{haw1,haw2}, Hawkins showed that if a deformation of the graded algebra $\efd$ of
differential forms on a Riemannian manifold $M$ comes from a spectral triple describing $M$, then the Poisson
tensor $\pi$ (which characterizes the deformation) and the Riemannian metric satisfy
the following conditions:
\begin{enumerate}
  \item[$(H_1)$] The associated metric contravariante connexion $\mathcal{D}$ is flat.
  \item[$(H_2)$] The metacurvature of $\mathcal{D}$ vanishes.
  \item[$(H_3)$] The Poisson tensor $\pi$ is compatible with the Riemannian volume
$\mu$\,: $$d(i_{\pi}\mu)=0.$$
\end{enumerate}
The metric contravariant connection associated naturally to any pair of pseudo-Riemannian metric and Poisson tensor is the contravariant analogue of the classical Levi-Civita connection; it has appeared first in \cite{Bouc1}. The meta\-curvature, introduced in \cite{haw2}, is a (2,3)-tensor field (symmetric in the contravariant indices and antisymmetric in the covariant indices) associated naturally to any flat, torsion-free contravariant
connection.\\
The main result of Hawkins (cf. \cite{haw2}) states that if $(M,\pi,g)$ is a triple satisfying $(H_1)\!-\!(H_3)$ with $M$ 
compact, then around any regular point $x_0\in M$ the Poisson tensor can be written  as
\begin{equation}\label{haw thm}
\pi=\sum_{i,j}a_{ij}\,X_i\wedge X_j
\end{equation}
where $a_{ij}$ are constants and $\{X_1,\ldots,X_s\}$ is a family of linearly independent commuting Killing vector fields.
\par On the other hand, the first author showed in \cite{Bouc2} that if $\zeta:\g\to\ecv$ is an action of a finite-dimensional real Lie algebra $\g$ on a smooth manifold $M$, and $r\in\wedge^2\g$ is a solution of the classical Yang-Baxter equation, then the map $\D^r:\ef\times\ef\to\ef$ given by
\begin{equation}\label{Bouc contr connection}
\D^r_\alpha\beta:=\sum_{i,j=1}^n a_{ij}\,\alpha(\zeta(u_i))L_{\zeta(u_j)}\beta
\end{equation}
where $\{u_1,\ldots,u_n\}$ is any basis of $\g$ and $a_{ij}$ are the components of $r$ in this basis, depends only on $r$ 
and $\zeta$ and defines a flat, torsion-free contravariant connection with respect to the Poisson tensor $\pi^r:=\zeta(r)$. Moreover, if $M$ is Riemannian, then $\D^r$ is nothing else but the metric contravariant connection associated to the metric and $\pi^r$, provided that the action preserves the metric. He also showed that when $\g$ acts freely on $M$, the metacurvature of  $\D^r$ vanishes.\\
In this setting, \eqref{haw thm} can be reexpressed by saying that there exists a free action $\xi:\g\to\mathfrak{X}^1(U)$ 
of a finite-dimensional abelian Lie algebra $\g$ on an open neighborhood $U$ of $x_0$, which preserves $g$, and a solution 
$r\in\wedge^2\g$ of the classical Yang-Baxter equation such that $\pi=\pi^r$. Moreover, since $\xi$ preserves $g$, 
$\D=\D^r$ where $\D$ is the metric contravariant connection associated to $\pi$ and $g$. Therefore, $\D$ is a Poisson connection, i.e. $\D\pi=0$, and hence an $\F^{reg}$-connection (see \cite{Bouc3}).
\par Given a flat, torsion-free $\F^{reg}$-connection $\D$ on a Poisson manifold $(M,\pi)$, we shall see that there exists a (2,2)-type tensor field $\T$ on the dense open set of regular points such that
\begin{enumerate}
  \item[\rm (i)] $\D\T=\M\,$ where $\M$ is the metacurvature of $\D$;
  \item[\rm (ii)] $\T$ vanishes if and only if the exterior differential of any parallel 1-form is also parallel.
\end{enumerate}
By looking at the proof of Boucetta's result closely, one observes that in order to show that the metacurvature vanishes 
when the action is free, the first author shows, in fact, that $\D^r$ is an $\F^{reg}$-connection and that whenever a 
1-form is $\D^r$-parallel then so is its exterior differential, meaning that $\T$ vanishes (and hence so does the 
metacurvature). In the case studied by Hawkins $\T$ vanishes since as
we saw above the Lie algebra action $\xi$ is free.\\
So it is natural to study the following problem, inverse of Boucetta's result: Given a triple $(M,\pi,g)$ whose metric contravariant connection is a flat $\F^{reg}$-connection and such that ${\bf T}=0$, is there a free action of a finite-dimensional Lie algebra $\g$ preserving $g$ and a solution $r\in\wedge^2\g$ of the classical Yang-Baxter equation such that $\pi=\pi^r$ and $\D=\D^r$?

\ms The main result of this paper gives a positive answer to this question in a more general setting. More precisely,
\begin{thm}\label{main result} Let $(M,\pi,\D)$ be a Poisson manifold endowed with a flat, torsion-free contravariant connection.
\begin{enumerate}
 \item[\rm (1)] If $\,\D$ is an $\F^{reg}$-connection and ${\bf T}=0$, then
for any regular point $x_0$ with rank $2r$, there exists
a free action $\zeta:\mathfrak{g}\rightarrow\mathfrak{X}(U)$ of a $2r$-dimensional reel Lie algebra $\g$ on neighborhood $U$ of $x_0$, and an invertible solution $r\in\wedge^2\mathfrak{g}$ of the classical Yang-Baxter equation, such taht $\pi=\pi^r$ and $\D=\D^r$.
\item[\rm (2)] Moreover, if $\D$ is the metric contravariant connection associated to $\pi$ and a Riemannian metric $g$, then the action can be chosen in such a way that its fondamental vector fields are Killing.
    \end{enumerate}
\end{thm}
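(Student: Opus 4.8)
\emph{Strategy of the proof.}
Fix a regular point $x_0$ of rank $2r$ and work on a connected neighbourhood $U$ on which the symplectic foliation $\F$ is regular of dimension $2r$; shrinking $U$, we may assume Weinstein's splitting theorem applies, so that the Casimir functions on $U$ are exactly those that are constant along the leaves. Since $\D$ is flat, torsion-free and an $\F^{reg}$-connection, it induces on the leafwise cotangent bundle $(T\F)^{*}$ a flat, torsion-free connection; flatness provides, after a further shrinking of $U$, a frame $\alpha_{1},\dots,\alpha_{2r}$ of $1$-forms on $U$ with $\D\alpha_{i}=0$ whose restrictions to each leaf form a coframe. Torsion-freeness then gives $[\alpha_{i},\alpha_{j}]_{\pi}=\D_{\alpha_{i}}\alpha_{j}-\D_{\alpha_{j}}\alpha_{i}=0$, so that the vector fields $X_{i}:=\pi_{\sharp}(\alpha_{i})$ form a commuting frame of $T\F$.

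Because $\T=0$, property (ii) of the introduction shows that each $d\alpha_{i}$ is again $\D$-parallel, and likewise the leafwise symplectic form is parallel along the leaves. Expanding these parallel objects in the parallel coframe, their coefficients are $\D$-parallel functions, hence Casimir. Write
\[
d\alpha_{i}=\sum_{j<k}c^{i}_{jk}\,\alpha_{j}\wedge\alpha_{k},\qquad \pi(\alpha_{i},\alpha_{j})=\pi^{ij}.
\]
Applying $d$ to the first relation and using the decomposition of $\wedge^{3}T^{*}U$ induced by $\F$, the identity $d^{2}\alpha_{i}=0$ splits into a purely leafwise part — the Jacobi relations among the $c^{i}_{jk}$ — and a part which forces $dc^{i}_{jk}\wedge\alpha_{j}\wedge\alpha_{k}=0$; since the $\alpha$'s are a leafwise coframe and each $dc^{i}_{jk}$ is conormal, this yields $dc^{i}_{jk}=0$. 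Hence the $c^{i}_{jk}$ are genuine constants, and the Jacobi relations make them the structure constants of a $2r$-dimensional real Lie algebra $\g$; the same device shows the $\pi^{ij}$ are constants, invertible as a matrix at $x_0$ since $x_0$ is regular.

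Now form the $\g$-valued $1$-form $\theta=\sum_{i}\alpha_{i}\otimes u_{i}$, where $\{u_{i}\}$ is the basis of $\g$ dual to $\{\alpha_{i}\}$; the structure equations say exactly that $d\theta+\tfrac12[\theta,\theta]=0$, and $\ker\theta$ is a corank-$2r$ integrable distribution transverse to $T\F$. The classical converse to the Maurer--Cartan equation then produces, after shrinking $U$, a free action $\zeta:\g\to\mathfrak{X}(U)$ whose fundamental fields $Z_{i}=\zeta(u_{i})$ span $T\F$, satisfy $[Z_{i},Z_{j}]=\sum_{k}c^{k}_{ij}Z_{k}$ and $L_{Z_{i}}\alpha_{j}=0$. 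Let $r\in\wedge^{2}\g$ have components the constants $\pi^{ij}$, so that $\zeta(r)=\pi=\pi^{r}$; its inverse $\omega=r^{-1}\in\wedge^{2}\g^{*}$ corresponds to the leafwise symplectic form, and the closedness of the latter translates — through the structure equations — into the Chevalley--Eilenberg cocycle condition for $\omega$, i.e. into $[r,r]=0$, while invertibility of $r$ is again the regularity of $x_0$. Finally $\D=\D^{r}$: a contravariant connection is determined by its values on a coframe, and both $\D$ and $\D^{r}$ annihilate $\alpha_{1},\dots,\alpha_{2r}$ — for $\D^{r}$ because the $\alpha_{i}$ are invariant, $\D^{r}_{\beta}\alpha_{i}=\sum_{k,l}a_{kl}\,\beta(Z_{k})\,L_{Z_{l}}\alpha_{i}=0$, where the $a_{kl}$ are the components of $r$. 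This proves part (1). For part (2), if $\D$ is the contravariant Levi-Civita connection of $(\pi,g)$ then $\D g=0$, so the Gram matrix $g(\alpha_{i},\alpha_{j})$ consists of Casimir, hence constant, functions, and one may normalize the coframe accordingly; combining this with the Koszul formula for the contravariant Levi-Civita connection — or, equivalently, with the uniqueness of that connection together with Boucetta's result that $\D^{r}$ is the Levi-Civita contravariant connection of $(\pi^{r},g)$ precisely when the action preserves $g$ — forces $L_{Z_{i}}g=0$, i.e. the $Z_{i}$ are Killing.

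The main obstacle is the constancy statement in the second paragraph: a priori the structure functions $c^{i}_{jk}$ are only Casimir (because $\D$ is merely flat), and what has to be shown is that they are constant on the whole neighbourhood $U$, not just along each leaf. It is exactly at this point that the hypotheses $\T=0$ and ``$\F^{reg}$-connection'' must be used together, and it is this constancy that upgrades a leafwise Maurer--Cartan system into an honest finite-dimensional Lie algebra acting on a full neighbourhood of $x_0$; once everything is written in the parallel coframe, the verification of the Yang--Baxter equation and of the Killing property is comparatively routine.
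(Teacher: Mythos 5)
Your overall plan (parallel coframe, structure functions, Maurer--Cartan, reconstruction of the action) is natural, but it breaks down at exactly the point you yourself single out as the main obstacle, and the argument you offer there does not close the gap. First, the expansion $d\alpha_i=\sum_{j<k}c^i_{jk}\,\alpha_j\wedge\alpha_k$ is incomplete: away from the symplectic case the parallel coframe of $T^*M$ is $\{\alpha_i,dy_u\}$ with $\Ker\anch=\mathrm{span}\{dy_u\}$, and $d\alpha_i$ has in general nonzero components on $\alpha_j\wedge dy_u$ and on $dy_u\wedge dy_v$ (see the explicit formula for $d\phi_i$ in the proof of Theorem 3.7 of the paper). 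Second, and more seriously, the conclusion you want --- that the $c^i_{jk}$ and the $\pi^{ij}$ are genuine constants --- is false in general: $\T=0$ together with parallelism only forces these coefficients to be Casimir, i.e.\ constant along the leaves but possibly depending on the transverse coordinates $y_u$; Theorem 3.8 of the paper says precisely that the relevant quantities are annihilated by the leafwise derivatives, not that they are transversally constant. Your $d^2\alpha_i=0$ argument does not rescue this: the component of $d^2\alpha_i$ of bidegree (one conormal, two leafwise) receives contributions not only from $dc^i_{jk}\wedge\alpha_j\wedge\alpha_k$ but also from the mixed components of $d\alpha_j$ wedged with $\alpha_k$, so one obtains a relation among all these terms rather than $dc^i_{jk}=0$. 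With non-constant Casimir structure functions the form $\theta$ does not satisfy the Maurer--Cartan equation of a single finite-dimensional Lie algebra, so the ``classical converse'' cannot be invoked; the same conflation of ``Casimir'' with ``constant'' undermines the claim that $r$ has constant invertible components and the Gram-matrix argument in part (2).

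This is exactly the difficulty the paper's proof is built to overcome, and it does so by a different construction. There, $\T=0$ is used (via Theorem 3.8 and Lemma 3.5) only to guarantee that the structure functions of the frame $\{X_i,Y_u\}$ dual to the flat coframe are Casimir. The Lie algebra is then \emph{not} read off from the parallel coframe: one first builds, leaf by leaf, vector fields $T_i$ spanning $T\S$ and commuting with all the $X_j$ (the commutant of a free transitive action, after Alekseevsky--Michor), and then corrects them by a matrix $\xi$ of Casimir functions, $Z_i=\sum_j\xi_{ji}T_j$, chosen so that the $Z_i$ also commute with the transverse fields $Y_u$; the existence of $\xi$ is a Frobenius problem in the $y$-variables whose integrability condition is supplied by the Jacobi identity $[T_i,[Y_u,Y_v]]=0$. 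Only after this transverse correction do the brackets $[Z_i,Z_j]$ and the coefficients of $\pi$ in the $Z$-frame become honest constants, because they are then killed by all the $X_k$ \emph{and} all the $Y_u$. To salvage your approach you would need an analogous transverse normalization in place of your constancy claim; as written, the proof does not go through.
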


The paper is organized as follows. In  Section 2, we recall some standard facts about contravariant connections and the metacurvature tensor; we also define the tensor $\T$. Section 3 is devoted to the computation of the metacurvature tensor (and the tensor $\T$ as well) in the case of an $\F^{reg}$-connection. In the last section, we give a proof of Theorem \ref{main result}.

\begin{nota}
For a smooth manifold $M$, $\cinf(M)$ will denote the space of smooth functions on $M$, $\Gamma(V)$ will denote the space of smooth sections of a vector bundle $V$ over $M$, $\Omega^p(M):=\Gamma(\wedge^pT^*M)$ will denote the space of differential p-forms, and $\mathfrak{X}^p(M):=\Gamma(\wedge^pTM)$ will denote the space of p-vector fields.
\par For a Poisson tensor $\pi$ on $M$, we will denote by $\anch: T^*M\to TM$ the anchor map defined by $\beta(\anch(\alpha))=\pi(\alpha,\beta)$, and by $H_f$ the Hamiltonian vector field of a function $f$, that is, $H_f:=\anch(df)$. We well also denote by $[\;,\,]_\pi$ the Koszul-Schouten bracket on differential forms (see, e.g., \cite{Kosz}); this is given on 1-forms by $$[\alpha,\beta]_\pi=L_{\anch(\alpha)}\beta-L_{\anch(\beta)}\alpha-d\bigl(\pi(\alpha,\beta)\bigr).$$
The symplectic foliation of $(M,\pi)$ will be denoted by $\S$, and $T\S=\img\anch$ will be its associated tangent distribution. Finally, we will denote by $M^{reg}$ the dense open set where the rank of $\pi$ is locally constant.
\end{nota}
\section{Preliminaries}
\subsection{Contravariant connections}
Contravariant connections on Poisson manifolds were defined by Vaismann \cite{Vais} and studied in detail by Fernandes \cite{Fer}. These connections play an important role in Poisson geometry (see for instance
\cite{Fer, Crai-Marc}) and have recently turned out to be useful in other branches of mathematics (e.g., \cite{haw1,haw2}).

\sskp The definition of a contravariant connection mimics the usual definition of a covariant connection, except that cotangent vectors have taken the place of tangent vectors. More precisely, a {\em contravariant connection} on a Poisson manifold $(M,\pi)$ is an $\R$-bilinear map
\[\D:\ef\times\ef\to\ef,\;(\alpha,\beta)\longmapsto\D_\alpha\beta\] such that for any $f\in\efl$,
\[\D_{f\alpha}\beta=f\,\D_\alpha\beta\quad\mbox{and}\quad\D_\alpha(f\beta)=f\,\D_\alpha\beta+\anch(\alpha)(f)\beta.\]
A contravariant connection $\D$ is called an $\F$-{\em connection} \cite{Fer} if it satisfies
\[(\forall\,a\in T^*M,\;\anch(a)=0)\;\Longrightarrow\;\D_a=0.\]
We call $\D$ an $\F^{reg}$-{\em connection} if the restriction of $\D$ to $M^{reg}$ is an $\F$-connection.
\par The {\em torsion} and the {\em curvature} of a contravariant connection $\D$ are formally identical
to the usual ones:
\begin{align*}
T(\alpha,\beta) & =\D_{\alpha}\beta-\D_{\beta}\alpha-[\alpha,\beta]_{\pi}\,,\\
R(\alpha,\beta)\gamma & =\D_{\alpha}\D_{\beta}\gamma-\D_{\beta}\D_{\alpha}\gamma-\D_{[\alpha,\beta]_{\pi}}\gamma\,.
\end{align*}
These are (2,1) and (3,1)-type tensor fields, respectively. When $T\equiv0$ (resp. $R\equiv0$), $\D$ is called {\em torsion-free} (resp. {\em flat}).\\
In local coordinates $\coor$, the local components of the torsion and curvature tensor fields are given by
\begin{gather}
T_{ij}^k = \Gamma_{ij}^k-\Gamma_{ji}^k-\frac{\partial\pi_{ij}}{\partial x_k}\,,\label{torsion local exp}\\
R_{ijk}^l=\sum_{m=1}^d\Gamma_{im}^l\Gamma_{jk}^m-\Gamma_{jm}^l\Gamma_{ik}^m+
\pi_{im}\frac{\partial\Gamma_{jk}^l}{\partial x_m}-\pi_{jm}\frac{\partial\Gamma_{ik}^l}{\partial x_m}-\frac{\partial\pi_{ij}}{\partial x_m}\Gamma_{mk}^l\,,\label{curvature local exp}
\end{gather}
where $\Gamma_{ij}^k$ are the {\em Christoffel symbols} defined by: $\D_{dx_i}dx_j=\sum_{k=1}^d\Gamma_{ij}^k\,dx_k\,$.

\ms Given a (pseudo-)Riemannian metric $g$ on a Poisson manifold $(M,\pi)$, one has a contravariant version of the Levi-Civita connection: there exists a unique torsion-free contravariant connection $\D$ on $M$ which is metric-compatible, i.e.,
\[\pi_{\#}(\alpha)\!\cdot\!\langle\beta,\gamma\rangle=\langle\mathcal{D}_{\alpha}\beta,\gamma\rangle
+\langle\beta,\mathcal{D}_{\alpha}\gamma\rangle\quad\forall\,\alpha,\beta,\gamma\in\ef,\]
where $\prs$ denotes the metric pairing induced by $g$.
This connection is determined by the formula:
\begin{equation}\begin{split}\label{Koszul formula}
\langle\D_{\alpha}\beta,\gamma\rangle = \frac{1}{2}\bigl\{ & \anch(\alpha)\!\cdot\!\langle\beta,\gamma\rangle +\anch(\beta)\!\cdot\!\langle\alpha,\gamma\rangle -\anch(\gamma)\!\cdot\!\langle\alpha,\beta\rangle\\
& \!\!+\langle[\alpha,\beta]_{\pi},\gamma\rangle-\langle[\beta,\gamma]_{\pi},\alpha\rangle
+\langle[\gamma,\alpha]_{\pi},\beta\rangle\bigr\}\,,
\end{split}\end{equation}
and is called the {\em metric contravariant connection} (or {\em contravariant Levi-Civita connection}) associated to $(\pi,g)$.
\subsection{The metacurvature}
In this subsection we recall briefly from \cite{haw2} the definition of the meta\-curvature tensor and give some related formulas.
\par Let $(M,\pi)$ be a Poisson manifold. Given a torsion-free contravariant connection $\D$ on $M$, there exists a unique bracket $\cro$ on the space $\efd$ of differential forms, with the following properties:
\begin{enumerate}
\item $\cro$ is bilinear, degree 0 and antisymetric \begin{equation}\label{antisym}\{\sigma,\tau\}=-(-1)^{\deg(\sigma)\deg(\tau)}\{\tau,\sigma\}.\end{equation}
\item $\cro$ satisfies the product rule
    \begin{equation}\label{product rule}\{\sigma,\tau\wedge\rho\}=\{\sigma,\tau\}\wedge\rho
    +(-1)^{\deg(\sigma)\deg(\tau)}\tau\wedge\{\sigma,\rho\}.\end{equation}
\item The exterior differential $d$ is a derivation with respect to $\cro$, i.e.,
    \begin{equation}\label{d is derv}d\{\sigma,\tau\}=\{d\sigma,\tau\}
    +(-1)^{\deg(\sigma)}\{\sigma,d\tau\}.\end{equation}
\item For any $f,g\in\efl$ and any $\sigma\in\efl$, \begin{equation}\label{haw brackt on 00-01}
    \{f,g\}=\pi(df,dg)\quad\mbox{and}\quad\{f,\sigma\}=\mathcal{D}_{df}\sigma.\end{equation}
\end{enumerate}
This bracket is given (on decomposable forms) by
\begin{equation}\begin{split}
\{\alpha_1\wedge\cdots\wedge\alpha_k,\beta_1 & \wedge\cdots\wedge\beta_l\}=(-1)^{k+1}\sum_{i,j}
(-1)^{i+j}\{\alpha_i,\beta_j\}\wedge\\
&
\wedge\alpha_1\wedge\cdots\wedge\widehat{\alpha}
_i\wedge\cdots\wedge\alpha_k\wedge\beta_1\wedge\cdots\wedge
\widehat{\beta}_j\wedge\cdots\wedge\beta_l\,,
\end{split}\end{equation}
where the hat $\widehat{\,}$ denotes the absence of the corresponding factor, and the brackets $\{\alpha_i,\beta_j\}$ are given by the formula\footnote{This formula appeared first in \cite{bah-bouc}.}:
\begin{equation}\label{bah-bouc eq}
\{\alpha,\beta\}=-\mathcal{D}_{\alpha}d\beta-\mathcal{D}_{\beta}d\alpha+d\mathcal{D}_{\beta}
\alpha+[\alpha, d\beta]_{\pi}\,.
\end{equation}
We call the bracket $\cro$ {\em Hawkins bracket}.\\
Hawkins showed that the Hawkins bracket satisfies the graded Jacobi identity,
\begin{equation}\label{graded jacb ident}
\{\sigma,\{\tau,\rho\}\}-\{\{\sigma,\tau\},\rho\}-(-1)^{\deg(\sigma)\deg(\tau)}\{\tau,\{\sigma,\rho\}\}=0\,,
\end{equation}
if and only if $\D$ is flat and a certain 5-index tensor, called the metacurvature of $\D$, vanishes identically. In fact, Hawkins showed that if $\D$ is flat, then it determines a (2,3)-type tensor field $\M$ symmetric in the contravariant indices and antisymmetric in the covariant indices, given by
\begin{equation}\label{metacuvature def}
\M(df,\alpha,\beta)=\{f,\{\alpha,\beta\}\}-\{\{f,\alpha\},\beta\}-\{\alpha,\{f,\beta\}\}\,.
\end{equation}
The tensor $\M$ is the {\em metacurvature} of $\D$.

\ms The following formulas, due to Hawkins, will be useful later. Let $\alpha$ be a parallel 1-form; since $\D$ is torsion-free, $\,[\alpha,\eta]_\pi=\D_\alpha\eta\,$ for any $\eta\in\efd$, and so, by \eqref{bah-bouc eq}, the Hawkins bracket of $\alpha$ and any 1-form $\beta$ is given by
\begin{equation}\label{Haw bracket of 1-forms}
\{\alpha,\beta\}=-\D_{\beta}d\alpha\,.
\end{equation}
Using this, one can deduce easily from \eqref{metacuvature def} that for any parallel 1-forms $\alpha,\beta$ and any 1-form $\gamma$,
\begin{equation}\label{metacourbure formula}
\M(\gamma,\beta,\alpha)=-\D_\gamma\D_\beta d\alpha\,.
\end{equation}
\subsection{The tensor $\T$}
We now define the tensor $\T$, an essential ingredient in our main result.
\par Let $(M,\pi)$ be a Poisson manifold endowed with a flat, torsion-free, contravariant $\F^{reg}$-connection $\D$. For each $x\in M^{reg}$ and any $a,b\in T^*_xM$, define
\begin{equation}\label{tens T}
\T_x(a,b):=\{\alpha,\beta\}(x)\quad(\in\textstyle\bigwedge^2T_x^*M),
\end{equation}
where $\cro$ denotes the Hawkins bracket associated to $\D$, and $\alpha$ and $\beta$ are parallel 1-forms
defined in a neighborhood of $x$ such that $\alpha(x)=a$ and $\beta(x)=b$. (Such 1-forms exist, see Proposition \ref{flatness carct}.) This is independent of the choice of $\alpha$ and $\beta$ since by \eqref{Haw bracket of 1-forms} and \eqref{antisym} we have
\begin{equation}
\T_x(a,b)=-(\D_\alpha d\beta)(x)=-(\D_\beta d\alpha)(x).
\end{equation}
The assignment $x\mapsto\T_x$ is then a smooth (2,2)-type tensor field on $M^{reg}$, symmetric in the contravariant indices and antisymmetric in the covariant indices, which by \eqref{metacourbure formula} verifies $\D\T=\M$, and which clearly vanishes if and only if the exterior differential of any parallel 1-form is also parallel.

\section{Computation of the tensors $\M$ and $\T$}
The metacurvature tensor is rather difficult to compute in general. In the symplectic case, Hawkins has established a simple formula for the metacurvature \cite[Theorem 2.4]{haw2}. Bahayou and the first author have also established in \cite{bah-bouc} a formula for the metacurvature in the case of a Lie-Poisson group endowed with a left-invariant Riemannian metric. In this section we explain how to compute the metacuvature (and the tensor $\T$ as well), in the case of an $\F^{reg}$-connection, generalizing thus Hawkins's formula.
\par Throughout this section, $\D$ will be a torsion-free contravariant $\F^{reg}$-connection on a $d$-dimensional Poisson manifold $(M,\pi)$.

\bs We begin with the following simple lemma.
\begin{lem}\label{ker stability lem}
Let $U\subseteq M$ be an open set on which the rank of $\pi$ is constant. For any $\alpha,\beta\in\Omega^1(U)$,
$\anch(\beta)=0$ implies $\anch(\D_\alpha\beta)=0$, and in this case, $\,\D_\alpha\beta=\Li_{\anch(\alpha)}\beta$.
\end{lem}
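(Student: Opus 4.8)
The plan is to combine two observations. First, since the rank of $\pi$ is constant on $U$ we have $U\subseteq M^{reg}$, so $\D$ is an $\F$-connection on $U$; in particular $\anch(\beta)=0$ forces $\D_\beta\equiv0$. Second, on $U$ the bundle map $\anch\colon T^*M\to TM$ has constant rank, so $T\S=\img\anch$ is a subbundle of $TM|_U$ and $\Ker\anch$ a subbundle of $T^*M|_U$; moreover the skew-symmetry of $\pi$, namely $\beta(\anch(\alpha))=\pi(\alpha,\beta)=-\alpha(\anch(\beta))$ for all $\alpha,\beta$, shows that $\anch(\omega)=0$ if and only if $\omega$ annihilates $\img\anch=T\S$. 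Thus $\Ker\anch=(T\S)^\circ$ over $U$.

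I would first derive the formula $\D_\alpha\beta=\Li_{\anch(\alpha)}\beta$. Since $\D_\beta\equiv0$ and $T\equiv0$, one has $\D_\alpha\beta=\D_\beta\alpha+[\alpha,\beta]_\pi=[\alpha,\beta]_\pi$. Expanding $[\alpha,\beta]_\pi=\Li_{\anch(\alpha)}\beta-\Li_{\anch(\beta)}\alpha-d\bigl(\pi(\alpha,\beta)\bigr)$, the term $\Li_{\anch(\beta)}\alpha$ vanishes because $\anch(\beta)=0$, and the last term vanishes since $\pi(\alpha,\beta)=-\alpha(\anch(\beta))=0$; this leaves $\D_\alpha\beta=\Li_{\anch(\alpha)}\beta$, which is the second assertion of the lemma.

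It then remains to check $\anch(\D_\alpha\beta)=0$, i.e. $\anch(\Li_X\beta)=0$ with $X:=\anch(\alpha)$; by the reduction in the first paragraph this is equivalent to showing that $\Li_X\beta$ annihilates $T\S$. Now $\beta$ annihilates $T\S$, $X$ is a section of $T\S$, and $T\S$ is involutive, so for any local section $Y$ of $T\S$ over $U$ we get $(\Li_X\beta)(Y)=X\bigl(\beta(Y)\bigr)-\beta([X,Y])=0-0=0$; since $T\S|_U$ is a subbundle, such local sections span every fibre, whence $(\Li_X\beta)_x$ kills $T_x\S$ for every $x\in U$, i.e. $\anch(\Li_X\beta)=0$. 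The only points that require a little care are the use of the constant-rank hypothesis — to know that $T\S|_U$ and $\Ker\anch|_U$ are subbundles, so that these pointwise identities may legitimately be tested on local sections — together with the $\F^{reg}$-hypothesis used to make $\D_\beta$ vanish; beyond that the argument is a short direct computation, so I do not anticipate a genuine obstacle.
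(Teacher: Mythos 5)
Your argument is correct: the paper states this lemma without proof (calling it ``simple''), and your derivation --- using the $\F^{reg}$-hypothesis together with $U\subseteq M^{reg}$ to kill $\D_\beta$, torsion-freeness to identify $\D_\alpha\beta$ with $[\alpha,\beta]_\pi=\Li_{\anch(\alpha)}\beta$, and the involutivity of $T\S=\img\anch$ (a regular subbundle on $U$ by the constant-rank hypothesis) to check that $\Li_{\anch(\alpha)}\beta$ still annihilates $T\S$ --- is exactly the intended reasoning. No gaps.
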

In other words, the kernel of the anchor map restricted to $U$ is stable under $\D$. The next lemma shows that, around any regular point, there exists a complementary subbundle of $\Ker\anch$ which is also stable under $\D$, provided that $\D$ is flat.
\begin{lem}\label{flatness spliting lem}
If $\D$ is flat, then for any $x\in M^{reg}$ and any $\H_0\subseteq T_x^*M$ such that $\,T_x^*M=(\Ker\anch)_x\oplus\mathcal{H}_0$, the cotangent bundle splits smoothly around $x$ into: $$T^*M=(\Ker\anch)\oplus\mathcal{H}$$ with $\H$ stable under $\D$, i.e. $\D\H\subseteq\H$, and $\mathcal{H}_x=\mathcal{H}_0$.
\end{lem}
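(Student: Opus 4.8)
The plan is to realize the complement $\H$ as the span of a local frame of \emph{parallel} $1$-forms; flatness enters exactly through the existence of such forms. First I would fix the setup. Since $x\in M^{reg}$, the rank of $\pi$ is constant, equal to some integer $k$, on a neighborhood $W$ of $x$; hence $\Ker\anch$ is a smooth subbundle of $T^*M$ over $W$ (of rank $d-k$), and by Lemma \ref{ker stability lem} it is $\D$-stable. By hypothesis $\H_0$ is a $k$-dimensional linear complement of $(\Ker\anch)_x$ in $T^*_xM$.

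Next, choose a basis $a_1,\ldots,a_k$ of $\H_0$. Because $\D$ is flat, Proposition \ref{flatness carct} provides, on a possibly smaller neighborhood $V\subseteq W$ of $x$, parallel $1$-forms $\alpha_1,\ldots,\alpha_k\in\Omega^1(V)$ with $\alpha_i(x)=a_i$. Shrinking $V$ if necessary, the $\alpha_i$ remain pointwise linearly independent on $V$, and since $(\alpha_1)_x,\ldots,(\alpha_k)_x$ span a complement of $(\Ker\anch)_x$, the subbundle $\H:=\mathrm{span}(\alpha_1,\ldots,\alpha_k)$ stays a complement of $\Ker\anch$ on all of $V$ — both "the $\alpha_i$ are independent'' and "$\H\oplus\Ker\anch=T^*M$'' being open conditions that hold at $x$. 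By construction $\H_x=\H_0$, and $T^*M=(\Ker\anch)\oplus\H$ on $V$.

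Finally I would check $\D$-stability. Any $\beta\in\Gamma(\H)$ can be written $\beta=\sum_i f^i\alpha_i$ with $f^i\in\cinf(V)$, and for any $\alpha\in\ef$ the Leibniz rule together with $\D_\alpha\alpha_i=0$ gives
$$\D_\alpha\beta=\sum_i\bigl(\anch(\alpha)(f^i)\,\alpha_i+f^i\,\D_\alpha\alpha_i\bigr)=\sum_i\anch(\alpha)(f^i)\,\alpha_i\in\Gamma(\H),$$
so $\D\H\subseteq\H$, as required. The one substantial ingredient here is the appeal to Proposition \ref{flatness carct}: the local existence of enough parallel $1$-forms is the contravariant counterpart of the classical fact that a flat connection admits local parallel frames, and it is the only place where flatness is genuinely used. (Absent that proposition, one would instead integrate the first-order system $\D_{dx_i}\alpha=0$ and use the vanishing of the curvature \eqref{curvature local exp} as the integrability condition; everything else above is linear algebra and the product rule.)
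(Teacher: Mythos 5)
Your argument has a circularity problem relative to the paper's logical structure: the key ingredient you invoke, Proposition \ref{flatness carct} (flatness implies local parallel $1$-forms with prescribed value at $x$), is stated \emph{after} this lemma and is proved \emph{using} it. The dependence is essential, not cosmetic: in the paper's proof of that proposition, one constructs $\gamma\in\Gamma(\H)$ with $\anch(\D_\phi\gamma)=\nabla_{\anch(\phi)}X=0$ and then concludes $\D_\phi\gamma=0$ precisely because $\D_\phi\gamma$ also lies in $\H$ (by the $\D$-stability of $\H$ asserted here) and $\H\cap\Ker\anch=\{0\}$. So as written, your main line of reasoning assumes what is to be proved. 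Everything downstream of that assumption — linear independence and complementarity being open conditions, $\H_x=\H_0$, and the Leibniz computation $\D_\alpha\bigl(\sum_i f^i\alpha_i\bigr)=\sum_i\anch(\alpha)(f^i)\,\alpha_i$ — is correct, but it rests on an unavailable premise.

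Your closing parenthetical is in fact the right repair, and it is essentially what the paper does, just packaged differently. The paper does not produce parallel forms at all: it writes a candidate complement as $\H=\mathrm{span}\{\theta_i=dx_i+\sum_u B_i^u\,dy_u\}$ in a chart adapted to the symplectic foliation, computes $\D_{\theta_i}\theta_j$ using $\D\,dy_u=0$ (the $\F$-connection hypothesis plus Lemma \ref{ker stability lem}), and observes that $\D$-stability is equivalent to a first-order PDE system $(\ast)$ for the unknowns $B_i^u$ in the leafwise variables $x_i$, with the $y_u$ as parameters; Frobenius's theorem applies because the integrability conditions are exactly the vanishing of the curvature components \eqref{curvature local exp}, and the initial condition at $x$ realizes $\H_x=\H_0$. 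If you prefer your route, you must first carry out the analogous Frobenius argument for the system $\D_{dx_j}\alpha=0$ directly (which is viable, since $\D_{dx_j}dy_u=0$ and the system again only involves $x$-derivatives with curvature as integrability condition), and only then run your span-and-Leibniz argument; you cannot cite Proposition \ref{flatness carct} for it.
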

\begin{proof}
Let $(U;x_i,y_u)$ ($i=1,\ldots,2r$; $u=1,\ldots,d-2r$) be a local carte around $x$ such that
$$\pi=\frac{1}{2}\sum_{i,j=1}^{2r}\pi_{ij}\,\frac{\partial}{\partial x_i}\wedge\frac{\partial}{\partial x_j}$$
and the matrix $(\pi_{ij})_{1\leq i,j\leq 2r}$ is constant and invertible; let $(\pi^{ij})_{1\leq i,j\leq 2r}$ denote the inverse matrix. The restriction of $\Ker\anch$ to $U$ is a (rank $d-2r$) subbundle of $T_{|_U}^*M$, so we can choose a (arbitrary) smooth decomposition
$$T_{|_U}^*M=(\Ker\anch)\oplus\mathcal{H}\,.$$ Then clearly $\Ker\anch=\text{span}\{dy_u\}$, and
$$\mathcal{H}=\text{span}\left\{\theta_i=dx_i+\sum_{u=1}^{d-2r}B_i^u\,dy_u\right\}$$ for some functions $B_i^u\in\cinf(U)$. Since $\D$ is a torsion-free $\F$-connection on $U$, one has $\D_{dy_u}=\D dy_u=0$ for all $u$. Thus, for any $i,j$,
\begin{align*}
\D_{\theta_i}\theta_j & = \D_{dx_i}dx_j+\sum_{u=1}^{d-2r}\anch(dx_i)(B_j^u)\,dy_u\\
& = \left(\sum_{k=1}^{2r}\Gamma_{ij}^k\,dx_k+\sum_{u=1}^{d-2r}\Gamma_{ij}^u\,dy_u\right)+
\sum_{u=1}^{d-2r}\sum_{k=1}^{2r}\pi_{ik}\frac{\partial B_j^u}{\partial x_k}\,dy_u\\
& =
\sum_{k=1}^{2r}\Gamma_{ij}^k\,\theta_k+\sum_{u=1}^{d-2r}\left(\Gamma_{ij}^u+\sum_{k=1}^{2r}
\left(\pi_{ik}\frac{\partial B_j^u}{\partial x_k}-\Gamma_{ij}^kB_k^u\right)\right)dy_u\,,
\end{align*}
where $\Gamma_{ij}^k,\Gamma_{ij}^u$ are the Christoffel symbols of $\D$. Therefore, the desired decomposition exists if and only if we may find a family of local functions $\{B_i^u\}_{i,u}$ satisfying the following system of PDEs
\begin{equation*}
\Gamma_{ij}^u+\sum_{k=1}^{2r}\left(\pi_{ik}\frac{\partial B_j^u}{\partial
x_k}-\Gamma_{ij}^kB_k^u\right)=0\quad\forall\,i,j,\forall\,u\,,
\end{equation*}
or equivalently
\begin{equation*}
\frac{\partial B_j^u}{\partial
x_i}=\sum_{k=1}^{2r}\left(\sum_{l=1}^{2r}\pi^{il}\Gamma_{lj}^k\right)B_k^u-\sum_{l=1}^{2r}\pi^{il}\Gamma_{lj}^u
\quad\forall\,i,j,\forall\,u\,.\eqno{(*)}
\end{equation*}
In matrix notation, this is
\begin{equation*}
\frac{\partial}{\partial x_i}B^u=\Gamma_iB^u+Y_i^u\,,
\end{equation*}
where
$$B^u=\left(
        \begin{array}{c}
          B_1^u \\
          \vdots \\
          \vdots \\
          B_{2r}^u \\
        \end{array}
      \right);\;
\Gamma_i=\left(\sum_{m=1}^{2r}\pi^{im}\Gamma_{mk}^l\right)_{1\leq k,\,l\leq 2r};\;
Y_i^u=-\sum_{j=1}^{2r}\pi^{ij}\left(
        \begin{array}{c}
          \Gamma_{j1}^u \\
          \vdots\\
          \vdots\\
          \Gamma_{j\,2r}^u\\
        \end{array}
      \right).$$
Considering the $B_i^{u\,}$'s as functions with variables $x_i$ and parameters $y_{u\,}$, the system above can be solved, according to  Frobenius's Theorem (see, e.g., \cite[Theorem 1.1]{HakT}), if and only if the integrability conditions
\begin{equation*}
    \Gamma_i \Gamma_j+\frac{\partial}{\partial x_j}\Gamma_i=\Gamma_j
\Gamma_i+\frac{\partial}{\partial x_i}\Gamma_j\,,\quad
    \Gamma_i Y_j^u+\frac{\partial}{\partial x_j}Y_i^u=\Gamma_j Y_i^u+\frac{\partial}{\partial
x_i}Y_j^u\,,
\end{equation*}
hold for all $i,j$ and all $u$. With indices, these are respectively
\begin{gather*}
\sum_{m=1}^{2r}\Gamma_{im}^l\Gamma_{jk}^m-\Gamma_{jm}^l\Gamma_{ik}^m+\pi_{im}\frac{
\partial\Gamma_{jk}^l}{\partial x_m}-\pi_{jm}\frac{\partial\Gamma_{ik}^l}{\partial x_m}=0\,,\\
\sum_{m=1}^{2r}\Gamma_{im}^u\Gamma_{jk}^m-\Gamma_{jm}^u\Gamma_{ik}^m+\pi_{im}\frac{
\partial\Gamma_{jk}^u}{\partial x_m}-\pi_{jm}\frac{\partial\Gamma_{ik}^u}{\partial x_m}=0\,,
\end{gather*}
which by \eqref{curvature local exp} mean that the curvature vanishes. Thus $(*)$ has solutions (which depend smoothly on the parameters and the initial values).
\end{proof}

\begin{nota}
Given $\mathcal{H}$ as above, the restriction of $\anch$ to $\H$ defines an isomorphism from $\H$ onto $T\S$; we will denote by $\varpi^{\mathcal{H}}:T\S\rightarrow \mathcal{H}$ its inverse.
\end{nota}

\begin{prop}\label{flatness carct}
The following are equivalent:
\begin{enumerate}
   \item[\rm (a)] $\D$ is flat.
   \item[\rm (b)] For any $x\in M^{reg}$ and any $a\in T^*_x M$, there exists a 1-form
$\alpha$ defined in a neighborhood of $x$ such that $\alpha(x)=a$ and $\D\alpha=0$.
   \item[\rm (c)] Around any $x\in M^{reg}$, there exists a smooth coframe
$(\alpha_1,\ldots,\alpha_d)$ of $M$ such that $\D\alpha_i=0$ for all $i$. Such a coframe well be called flat.
\end{enumerate}
\end{prop}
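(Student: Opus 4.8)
The plan is to prove the cycle $(a)\Rightarrow(b)\Rightarrow(c)\Rightarrow(a)$; the last two implications are soft, the first is the substantial one.

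For $(c)\Rightarrow(a)$: if a flat coframe $(\alpha_1,\dots,\alpha_d)$ exists near every point of $M^{reg}$, then since the $\alpha_i$ span $T^*M$ and $R$ is tensorial, $R(\alpha_i,\alpha_j)\alpha_k=\D_{\alpha_i}\D_{\alpha_j}\alpha_k-\D_{\alpha_j}\D_{\alpha_i}\alpha_k-\D_{[\alpha_i,\alpha_j]_\pi}\alpha_k=0$, so $R$ vanishes on the dense open set $M^{reg}$ and hence on all of $M$ by continuity. For $(b)\Rightarrow(c)$: fix $x\in M^{reg}$ and a basis $(e_1,\dots,e_d)$ of $T^*_xM$; by $(b)$ choose parallel $1$-forms $\alpha_i$ near $x$ with $\alpha_i(x)=e_i$; on a common neighbourhood they are linearly independent at $x$, hence (by continuity) on a smaller neighbourhood, giving a flat coframe.

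For $(a)\Rightarrow(b)$, fix $x\in M^{reg}$ and, using flatness, invoke Lemma \ref{flatness spliting lem} to get, in the chart $(U;x_i,y_u)$ of its proof, the $\D$-stable complement $\H=\mathrm{span}\{\theta_i\}$, $\theta_i=dx_i+\sum_u B_i^u\,dy_u$, of $\Ker\anch$, for which $\D_{\theta_j}\theta_i=\sum_{k\le 2r}\Gamma^k_{ji}\theta_k$. First, every $dy_u$ is parallel: $\anch(dy_u)=0$ gives $\D_{dy_u}=0$, while for any $\alpha$ Lemma \ref{ker stability lem} gives $\D_\alpha(dy_u)=\Li_{\anch(\alpha)}dy_u=d\bigl(dy_u(\anch(\alpha))\bigr)=0$ since $\anch(\alpha)\in T\S=\mathrm{span}\{\partial/\partial x_1,\dots,\partial/\partial x_{2r}\}$. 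Hence it is enough to build a parallel frame of $\H$ near $x$. Writing $\alpha=\sum_i a_i\theta_i$ and using $\D_{dy_u}=0$ together with the formula for $\D_{\theta_j}\theta_i$, the equation $\D\alpha=0$ amounts to $\anch(\theta_j)(a_k)+\sum_i\Gamma^k_{ji}a_i=0$ for $j,k\le 2r$; since $\anch(\theta_j)=\anch(dx_j)=\sum_l\pi_{jl}\,\partial/\partial x_l$, contracting with the inverse matrix $(\pi^{ij})$ of $(\pi_{ij})_{i,j\le 2r}$ turns this into the homogeneous linear system
\[
\frac{\partial a_k}{\partial x_m}=-\sum_{i,j=1}^{2r}\pi^{mj}\Gamma^k_{ji}\,a_i\qquad(1\le k,m\le 2r),
\]
with the $y_u$ as parameters. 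By the computation already performed in the proof of Lemma \ref{flatness spliting lem}, the Frobenius integrability conditions of this system are exactly the curvature identities $R^l_{ijk}=0$ ($i,j,k,l\le 2r$) of \eqref{curvature local exp}, which hold since $\D$ is flat; so for each initial value at $x$ there is a smooth local solution, depending smoothly on that value. Letting the initial values range over a basis of $\H_x$ produces a parallel frame of $\H$ near $x$; adjoining the $dy_u$ gives a flat coframe, and for arbitrary $a\in T^*_xM$ the corresponding constant-coefficient combination of this coframe is a parallel $1$-form with value $a$ at $x$, which is $(b)$.

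The only point requiring care is the identification of the integrability conditions of the displayed system with the vanishing of the curvature; but this is the same manipulation carried out in the proof of Lemma \ref{flatness spliting lem} (there for the inhomogeneous system governing the $B^u_i$), so nothing new is needed — I expect it to be the main obstacle only in a purely bookkeeping sense.
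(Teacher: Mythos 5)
Your proof is correct and follows essentially the same route as the paper: both hinge on Lemma \ref{flatness spliting lem} to produce the $\D$-stable complement $\H$ of $\Ker\anch$, observe that the $dy_u$ are automatically parallel because $\D$ is an $\F$-connection on $M^{reg}$, and then obtain a parallel frame of $\H$ from a Frobenius-integrable linear system whose integrability conditions are the vanishing of the curvature. The only cosmetic difference is that the paper transports the problem to the tangent side via the induced partial connection $\nabla_{\anch(\alpha)}\anch(\beta)=\anch(\D_\alpha\beta)$ on $T\S$ and quotes the classical flat-connection argument, whereas you write the parallel-transport equations directly for the coefficients of $\alpha$ in the $\theta$-frame; the content is the same.
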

\begin{proof}
The implications (b)$\;\Longrightarrow\;$(c) and (c)$\;\Longrightarrow\;$(a) are obvious. To show (a)$\;\Longrightarrow\;$(b), let $U\subseteq M$ be an open neighborhood of $x$ on which the rank of $\pi$ is constant. Over $U$, $T\S$ is a (involutive) regular distribution and $\D$ is a torsion-free $\F$-connection. So we can define a partial connection $\nabla$ on $T_{|_U}\S$ by setting for any $\alpha,\beta\in\Omega^1(U)$,
\begin{equation}\label{associated contr connection}
\nabla_{\anch(\alpha)}\anch(\beta)=\anch(\D_\alpha\beta).
\end{equation}
One verifies  immediately that the curvature tensor fields $R^\nabla$ and $R^\D$
respectively of $\nabla$ and $\D$  are related by:
$$R^\nabla\bigl(\anch(a),\anch(b)\bigr)\anch(c)=\anch\left(R^\D(a,b)c\right)\quad\forall\,a,b,
c\in T_{|_U}^*M,$$
and hence $R^\nabla$ vanishes since by hypothesis $R^\D$ does. Using  Frobenius's Theorem, we can then show in a way similar to the classical case that, for any $v\in T_x\S$, there exists a vector field $X$ defined on some neighborhood of $x$ such that $X(x)=v$, $X$ is tangent to $T\S$, that is,  $X(y)\in T_y\S$ for any $y$ near
$x$, and $\nabla X=0$.
\par Now let $a\in T_x^*M$. According to Lemma \ref{flatness spliting lem}, the cotangent bundle  splits smoothly around $x$ into: $T^*M=(\Ker\anch)\oplus\mathcal{H}$ with $\mathcal{H}$ stable under $\D$. $\mbox{Write}\;\,a=b+c$ with $b\in\Ker\anch(x)$ and $c\in\mathcal{H}_x$. By the argument above, there exists a $\nabla$-parallel vector field $X$ defined in a neighborhood of $x$ which is tangent to $T\S$ and such that $X(x)=\anch(c)$. Put
$\gamma=\varpi^{\mathcal{H}}(X)\in\Gamma(\mathcal{H})$; then $\gamma(x)=c$, and for any 1-form
$\phi$, $\anch(\D_\phi\gamma)=\nabla_{\anch(\phi)}X=0$ implying that $\D\gamma=0$. Taking $\,\alpha=\sum_{u=1}^{s}b_u\,dy_u+\gamma$, where $(y_u)$ is a family of local functions on $M$ such that $\Ker\anch=\text{span}\{dy_1,\ldots,dy_s\}$ near $x$, and $b_u$ are the coordinates of $b$ in $\{dy_1(x),\ldots,dy_s(x)\}$, we obtain finally the desired 1-form.
\end{proof}

The following corollary is a refinement of the preceding proposition.
\begin{cor}\label{flat coordn systm corol}
If $\D$ is flat, around any $x\in M^{reg}$ there exists an $\S$-foliated coordinate system with leafwise coordinates $\{x_i\}_{i=1}^{2r}$ and transverse coordinates $\{y_u\}_{u=1}^{d-2r}$  such that for any $\mathcal{H}$ as in Lemma \ref{flatness spliting lem},
$${\bf F}^*=\left(\phi_i:=\varpi^{\mathcal{H}}(\partial/\partial x_i)\,;\,dy_u\right)$$
is a flat coframe of $M$ near $x$. Such a coordinate system will be called flat.
\end{cor}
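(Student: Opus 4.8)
The plan is to upgrade the argument in the proof of Proposition~\ref{flatness carct}, which manufactures a single $\D$-parallel section of $\mathcal{H}$ from a single $\nabla$-parallel vector field tangent to $\S$: here I want a whole commuting frame of such vector fields, and then to straighten it into the coordinate fields $\partial/\partial x_i$. Fix $x\in M^{reg}$ and an open $U\ni x$ on which $\pi$ has constant rank $2r$, so that the partial connection $\nabla$ on $T_U\S$ given by \eqref{associated contr connection} is available and flat. First I would record that $\nabla$ is also torsion-free: for $\alpha,\beta\in\Omega^1(U)$, since $\anch$ intertwines $[\,,\,]_\pi$ with the Lie bracket,
$$\nabla_{\anch(\alpha)}\anch(\beta)-\nabla_{\anch(\beta)}\anch(\alpha)-[\anch(\alpha),\anch(\beta)]=\anch\bigl(\D_\alpha\beta-\D_\beta\alpha-[\alpha,\beta]_\pi\bigr)=\anch\bigl(T(\alpha,\beta)\bigr)=0.$$
Then, exactly as in the proof of Proposition~\ref{flatness carct}, for any basis $v_1,\dots,v_{2r}$ of $T_x\S$ the Frobenius argument yields $\nabla$-parallel vector fields $X_1,\dots,X_{2r}$ near $x$, tangent to $T\S$, with $X_i(x)=v_i$; after shrinking $U$ they are everywhere linearly independent, and being $\nabla$-parallel for a torsion-free $\nabla$ they pairwise commute, $[X_i,X_j]=\nabla_{X_i}X_j-\nabla_{X_j}X_i=0$.

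Next I would produce the chart. Starting from any $\S$-foliated chart $(U;x_i',y_u)$ around $x$ (leaves $\{y=\text{const}\}$, so that $\Ker\anch=\text{span}\{dy_u\}$), the flows $\Phi^i_t$ of the $X_i$ commute and, the $X_i$ being tangent to the leaves, preserve the foliation; hence
$$\Psi(t_1,\dots,t_{2r},y):=\bigl(\Phi^1_{t_1}\circ\cdots\circ\Phi^{2r}_{t_{2r}}\bigr)(0,y)$$
(where $(0,y)$ is the point with $x'=0$ and transverse coordinate $y$) satisfies $\partial\Psi/\partial t_i=X_i$ by commutation, while $\partial\Psi/\partial y_u|_0=\partial/\partial y_u$ is transverse; so $\Psi$ is a local diffeomorphism near the origin, and $(x_i,y_u):=\Psi^{-1}$ is a new $\S$-foliated chart, with the same transverse coordinates $y_u$, in which $\partial/\partial x_i=X_i$.

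Finally I would verify that $\mathbf{F}^*$ is a flat coframe for every $\mathcal{H}$ as in Lemma~\ref{flatness spliting lem}. Each $dy_u\in\Ker\anch$, so by Lemma~\ref{ker stability lem}, $\D_\alpha dy_u=\Li_{\anch(\alpha)}dy_u=d\bigl(\pi(\alpha,dy_u)\bigr)=-d\bigl(\alpha(\anch(dy_u))\bigr)=0$ for every $\alpha\in\Omega^1(U)$, i.e.\ $dy_u$ is $\D$-parallel. On the other hand $\phi_i=\varpi^{\mathcal{H}}(\partial/\partial x_i)=\varpi^{\mathcal{H}}(X_i)$ lies in the $\D$-stable bundle $\mathcal{H}$, on which $\anch$ is injective, so $\anch(\D_\alpha\phi_i)=\nabla_{\anch(\alpha)}X_i=0$ forces $\D_\alpha\phi_i=0$; thus $\phi_i$ is $\D$-parallel as well — and this conclusion does not depend on $\mathcal{H}$, since $\nabla$ does not. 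As $\phi_i(x)$ spans $\mathcal{H}_x$, $dy_u(x)$ spans $(\Ker\anch)_x$, and $T^*_xM=(\Ker\anch)_x\oplus\mathcal{H}_x$, the family $\mathbf{F}^*$ is a coframe near $x$, necessarily flat. I expect the only genuinely delicate point to be the foliated straightening in the middle step — rectifying all the $X_i$ simultaneously while keeping the transverse coordinates fixed — but that is the classical simultaneous-flow argument carried out leafwise.
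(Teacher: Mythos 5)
Your proof is correct, and it fills in exactly the argument the paper leaves implicit: the paper states the corollary without proof as a ``refinement'' of Proposition~\ref{flatness carct}, and your route --- build a $\nabla$-parallel (hence commuting, by torsion-freeness of the induced partial connection) frame of $T\S$, straighten it leafwise into $\partial/\partial x_i$ keeping the transverse coordinates, then check $\D dy_u=0$ via Lemma~\ref{ker stability lem} and $\D\phi_i=0$ via the $\D$-stability of $\mathcal{H}$ and injectivity of $\anch$ on $\mathcal{H}$ --- is precisely the intended one, matching the characterization $\nabla\,\partial/\partial x_i=0$ of Remark~\ref{flat coord systm rmk}.
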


\begin{rmk}\label{flat coord systm rmk}
Another equivalent way of expressing that the $\S$-foliated coordinate system $(x_i,y_u)$ is flat is the following: $\nabla\partial/\partial x_i=0$ for all $i$, where $\nabla$ is the (local) partial connection defined by
\eqref{associated contr connection}.
\end{rmk}

\begin{center}
{\em We assume for the remainder of this section that $\D$ is flat}.
\end{center}

\sskp We shall compute the tensors $\M$ and $\T$ in the coframe ${\bf F^*}$. To do so, we need first to determine its dual frame.
\par With the notations of Corollary \ref{flat coordn systm corol}, for each $i$, there exist unique functions, $A_i^1,\ldots,A_i^{d-2r}$, defined in neighborhood of $x$ such that
\begin{equation}\label{structure functions}
dx_i+\sum_{u=1}^{d-2r}A_i^u\,dy_u\in\mathcal{H}\,.
\end{equation}
For any $i$ and any $u$ we put
\begin{equation}\label{repere dual plat}
X_i:=-H_{x_i}=-\anch(dx_i)\,,\quad Y_u:=\frac{\partial}{\partial
y_u}-\sum_{i=1}^{2r}A_i^u\,\frac{\partial}{\partial x_i}\,.
\end{equation}
\begin{lem}\label{dual frame lem}
With the above notations, $(X_i,Y_u)$ is the dual frame to ${\bf F}^*$.
Moreover, the vector fields $X_i$ and $Y_u$ are, respectively,
Hamiltonian and Poisson, and verify
\begin{equation}\begin{split}\label{dual frame-lie bracket}
& [X_i,X_j]= -\sum_{k=1}^{2r}\frac{\partial\pi_{ij}}{\partial x_k}\,X_k\,;\quad
[X_i,Y_u]= \sum_{j=1}^{2r}\frac{\partial A_i^u}{\partial x_j}\,X_j\,;\\
& [Y_u,Y_v]= \sum_{i,j=1}^{2r}\pi^{ij}\biggl(\frac{\partial A_j^u}{\partial y_v}-\frac{\partial
A_j^v}{\partial y_u}+\sum_{k=1}^{2r} A_k^u\frac{\partial A_j^v}{\partial
x_k}-A_k^v\frac{\partial A_j^u}{\partial x_k}\biggr)\,X_i\,.
\end{split}\end{equation}
Here, $\pi_{ij}:=\pi(dx_i,dx_j)$ and $(\pi^{ij})$ is the inverse matrix of $(\pi_{ij})$.
\end{lem}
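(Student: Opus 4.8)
\emph{Strategy.} The plan is first to describe the coframe $\mathbf F^{*}$ and the candidate dual frame $(X_i,Y_u)$ in the flat chart, then to verify the duality together with the claims that $X_i$ is Hamiltonian and $Y_u$ is Poisson, and finally to read off the three Lie brackets --- the last two being almost automatic once $Y_u$ is known to be Poisson. In the flat $\S$-foliated chart the leaves of $\S$ are the slices $\{y_u=\mathrm{const}\}$, so $T\S=\img\anch=\mathrm{span}\{\partial/\partial x_i\}$, $\Ker\anch=\mathrm{span}\{dy_u\}$ and $\anch(dx_j)=\sum_{k=1}^{2r}\pi_{jk}\,\partial/\partial x_k$. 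Writing $\psi_j:=dx_j+\sum_uA_j^u\,dy_u$ for the generators of $\mathcal H$ (see \eqref{structure functions}), the conditions $\phi_i\in\mathcal H$ and $\anch(\phi_i)=\partial/\partial x_i$, together with $(\pi^{ij})$ being the inverse of $(\pi_{ij})$, force $\phi_i=\sum_j\pi^{ij}\psi_j$; I would keep this and $X_i=-\sum_k\pi_{ik}\,\partial/\partial x_k$ at hand throughout.

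\emph{Duality and $X_i$.} Of the four pairings, $dy_u(X_j)=0$ holds since $X_j\in T\S$, and $dy_u(Y_v)=\delta_{uv}$ is immediate from the definition of $Y_v$. For the other two I would compute, using only $\pi(\alpha,\beta)=\beta(\anch(\alpha))$, that $\phi_i(X_j)=-\pi(dx_j,\phi_i)=\pi(\phi_i,dx_j)=dx_j(\anch(\phi_i))=dx_j(\partial/\partial x_i)=\delta_{ij}$, and $\psi_j(Y_u)=-A_j^u+A_j^u=0$, whence $\phi_i(Y_u)=0$. Thus $(X_i,Y_u)$ is dual to $\mathbf F^{*}$. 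That $X_i=H_{-x_i}$ is Hamiltonian is then immediate.

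\emph{$Y_u$ is Poisson --- the main point.} I would check $L_{Y_u}\pi=0$ by evaluating it on pairs of elements of $\mathbf F^{*}$. When one argument is some $dy_w$ the value vanishes trivially, because $\pi(\cdot,dy_w)=0$ (since $\anch(dy_w)=0$) and $L_{Y_u}dy_w=d(Y_u(y_w))=d(\delta_{uw})=0$. For $(L_{Y_u}\pi)(\phi_i,\phi_l)$ I would use that $\phi_i(Y_u)=0$ gives, via Cartan's formula, $L_{Y_u}\phi_i=i_{Y_u}d\phi_i$; then expand $d\phi_i=\sum_j d\pi^{ij}\wedge\psi_j+\sum_j\pi^{ij}\sum_w dA_j^w\wedge dy_w$, contract with $Y_u$ (the first group of terms drops since $i_{Y_u}\psi_j=0$), and pair with $\pi$ using $\pi(\psi_j,\phi_l)=-\delta_{jl}$, $\pi(dy_w,\phi_l)=0$, $\pi(dA_j^u,\phi_l)=-\partial A_j^u/\partial x_l$. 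A short computation then reduces the vanishing of $(L_{Y_u}\pi)(\phi_i,\phi_l)$ to the single identity
\[Y_u(\pi^{il})=\sum_j\pi^{ij}\,\frac{\partial A_j^u}{\partial x_l}-\sum_j\pi^{lj}\,\frac{\partial A_j^u}{\partial x_i}.\]
To prove it I would use that $\mathbf F^{*}$ is a flat coframe (Corollary \ref{flat coordn systm corol}) and that $\D$ is torsion-free: from $\D\phi_i=\D\phi_j=0$ one has $[\phi_i,\phi_j]_\pi=\D_{\phi_i}\phi_j-\D_{\phi_j}\phi_i=0$, so expanding $[\phi_i,\phi_j]_\pi=L_{\partial/\partial x_i}\phi_j-L_{\partial/\partial x_j}\phi_i+d\pi^{ij}$ in the basis $(\psi_l,dy_w)$ and extracting the $dy_w$-component yields precisely the displayed relation (its $\psi_l$-component only reproduces the closedness of the symplectic form on the leaves of $\S$, i.e.\ the Jacobi identity for $\pi$, and is automatic). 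Recognizing that torsion-freeness against the flat coframe is exactly what forces the $Y_u$ to be Poisson is the heart of the proof; everything else is bookkeeping.

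\emph{The three brackets.} With $X_i=H_{-x_i}$, the identity $[H_f,H_g]=H_{\{f,g\}}$, and $\anch(dy_u)=0$, $\anch(dx_k)=-X_k$, one gets $[X_i,X_j]=H_{\{x_i,x_j\}}=H_{\pi_{ij}}=\anch(d\pi_{ij})=-\sum_k(\partial\pi_{ij}/\partial x_k)X_k$. Since $Y_u$ is Poisson, $L_{Y_u}H_f=H_{Y_u(f)}$ for every $f$; with $f=-x_i$ and $Y_u(x_i)=-A_i^u$ this gives $[X_i,Y_u]=-[Y_u,H_{-x_i}]=H_{Y_u(x_i)}=H_{-A_i^u}=-\anch(dA_i^u)=\sum_j(\partial A_i^u/\partial x_j)X_j$. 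Finally, expanding $[Y_u,Y_v]$ in the dual frame, Cartan's formula gives $dy_w([Y_u,Y_v])=Y_u(\delta_{vw})-Y_v(\delta_{uw})=0$, while $\phi_i([Y_u,Y_v])=-d\phi_i(Y_u,Y_v)=-\sum_j\pi^{ij}\,d\psi_j(Y_u,Y_v)=-\sum_j\pi^{ij}\bigl(Y_u(A_j^v)-Y_v(A_j^u)\bigr)$; writing $Y_u(A_j^v)$ and $Y_v(A_j^u)$ out in coordinates produces the stated formula.
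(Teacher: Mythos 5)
Your proof is correct and follows essentially the same route as the paper: both hinge on the observation that $\D\phi_i=0$ together with torsion-freeness gives $[\phi_i,\phi_j]_\pi=0$, which is then exploited to show $L_{Y_u}\pi=0$, after which the first two brackets follow from $X_i$ being Hamiltonian and $Y_u$ Poisson, and the last by direct computation. The only cosmetic difference is that the paper evaluates $[\phi_i,\phi_j]_\pi=0$ against $Y_u$ and manipulates Lie derivatives abstractly, whereas you extract the $dy_u$-component of the same identity in the $(\psi_j,dy_u)$-basis and reduce it to an explicit scalar relation; the content is identical.
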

\begin{proof}
The fact that $(X_i,Y_u)$ is the dual frame to ${\bf F^*}$ follows immediately, once we note that
\begin{equation}\label{phi expression}
\phi_i:=\varpi^{\mathcal{H}}(\partial/\partial x_i)=\sum_{j=1}^{2r}\pi^{ij}\Bigl(dx_j+\sum_{u=1}^{d-2r}A_j^u\,dy_u\Bigr).
\end{equation}
By definition, each of the vector fields $X_i$ is Hamiltonian.
To see that each $Y_u$ is Poisson, observe that the equation $[\phi_i,\phi_j]_\pi=0$ yields
\begin{align*}
Y_u\cdot\pi(\phi_i,\phi_j) & = L_{\partial/\partial
x_i}\phi_j\,(Y_u)-L_{\partial/\partial x_j}\phi_i\,(Y_u)\\
& = \textstyle-\phi_j\bigl(\bigl[\frac{\partial}{\partial x_i},Y_u\bigr]\bigr)
+\phi_i\bigl(\bigl[\frac{\partial}{\partial x_j},Y_u\bigr]\bigr)\\
& = \textstyle-L_{Y_u}\phi_j\bigl(\frac{\partial}{\partial x_i}\bigr)+Y_u\cdot\pi(\phi_i,\phi_j)
+L_{Y_u}\phi_i\bigl(\frac{\partial}{\partial x_j}\bigr)-Y_u\cdot\pi(\phi_j,\phi_i)\\
& = -\pi(\phi_i,L_{Y_u}\phi_j)-\pi(L_{Y_u}\phi_i,\phi_j)+2Y_u\cdot\pi(\phi_i,\phi_j),
\end{align*}
hence $\,L_{Y_u}\pi\,(\phi_i,\phi_j)=0\,$; in addition, we have
$$L_{Y_u}\pi\,(\phi_i,dy_v)=-\pi(\phi_i,L_{Y_u}dy_v)=-\pi(\phi_i,d(Y_u(y_v))=0\,,$$
and it is clear that we also have $L_{Y_u}\pi\,(dy_v,dy_w)=0$. It follows that $L_{Y_u}\pi=0$, which means that $Y_u$ is Poisson. Finally,
$$[X_i,X_j]=H_{\pi(dx_i,dx_j)}=-\sum_{k=1}^{2r}\frac{\partial\pi_{ij}}{\partial x_k}\,X_k\,,\quad
[X_i,Y_u]=H_{Y_u(x_i)}=\sum_{j=1}^{2r}\frac{\partial A_i^u}{\partial x_j}\,X_j\,,$$
and the last equality of \eqref{dual frame-lie bracket} follows by direct computation.
\end{proof}

We now can give the expression of the metacurvature in the coframe ${\bf F}^*$.
\begin{thm}\label{local exp for M thm}
With the same notations as above, we have
\begin{enumerate}
  \item[\rm (a)] For any $u=1,\ldots,d-2r$, $\,\M(dy_u,\cdot\,,\cdot\,)=0$.
  \item[\rm (b)] For any $i,j,k=1,\ldots,2r$,
      \begin{align*}
\M(\phi_i,\phi_j,\phi_k) & = -\sum_{l<m}\frac{\partial^3\,\pi_{lm}}{\partial x_i\partial
x_j\partial x_k}\,\phi_l\wedge\phi_m+\sum_{l,u}\frac{\partial^3\,A_l^u}{\partial x_i\partial
x_j\partial x_k}\,\phi_l\wedge dy_u\\
+\!\sum_{u<v,\,l}\frac{\partial^2}{\partial x_i\partial x_j} &
\biggl(\pi^{kl}\biggl(\frac{\partial A_l^u}{\partial y_v}-\frac{\partial A_l^v}{\partial
y_u}+\sum_m A_m^u\frac{\partial A_l^v}{\partial x_m}-A_m^v\frac{\partial A_l^u}{\partial
x_m}\biggr)\!\biggr)dy_u\wedge dy_v.
\end{align*}
\end{enumerate}
\end{thm}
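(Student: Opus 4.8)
The plan is to reduce both statements to formula~\eqref{metacourbure formula}, namely $\M(\gamma,\beta,\alpha)=-\D_\gamma\D_\beta d\alpha$, which holds for $\gamma$ arbitrary as soon as $\alpha$ and $\beta$ are parallel; by Corollary~\ref{flat coordn systm corol} every member of ${\bf F}^*$ is $\D$-parallel, so this formula is available for any triple drawn from ${\bf F}^*$. The preliminary observation I would make is that $\D_\gamma$ extends to a degree-$0$ derivation of $\efd$ with $\D_\gamma f=\anch(\gamma)(f)$ on functions; hence if a form $\omega$ is expanded in the flat coframe as $\omega=\sum_I f_I\,\Theta^I$ with each $\Theta^I$ a wedge of $\phi$'s and $dy_u$'s, then $\D_\gamma\omega=\sum_I\anch(\gamma)(f_I)\,\Theta^I$. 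In particular $\anch(\phi_i)=\partial/\partial x_i$ (since $\phi_i=\varpi^{\mathcal{H}}(\partial/\partial x_i)$ and $\partial/\partial x_i\in T\S$), so $\D_{\phi_i}$ acts on such an $\omega$ simply by differentiating every coefficient $f_I$ with respect to $x_i$.

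Assertion~(a) then follows at once: near $x$ the bundle $\Ker\anch$ is spanned by the $dy_u$, so $\anch(dy_u)=0$, and since $\D$ is an $\F^{reg}$-connection and $x\in M^{reg}$ this gives $\D_{dy_u}=0$. Evaluating the tensor $\M(dy_u,\cdot\,,\cdot\,)$ on an arbitrary pair $\theta',\theta''$ from the flat coframe and using~\eqref{metacourbure formula}, $\M(dy_u,\theta',\theta'')=-\D_{dy_u}\D_{\theta'}d\theta''=0$. (Combined with $d(dy_u)=0$, this also shows that the components $\M(\phi_i,\phi_j,\phi_k)$ are the only ones that can be nonzero.)

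For assertion~(b), formula~\eqref{metacourbure formula} gives $\M(\phi_i,\phi_j,\phi_k)=-\D_{\phi_i}\D_{\phi_j}d\phi_k$, so, by the remark above, the whole computation reduces to writing $d\phi_k$ in the coframe ${\bf F}^*$ and then differentiating its coefficients twice, with respect to $x_j$ and then to $x_i$. I would obtain $d\phi_k$ from $d\phi_k(e,e')=e(\phi_k(e'))-e'(\phi_k(e))-\phi_k([e,e'])$ applied to the dual frame $(X_i,Y_u)$ of Lemma~\ref{dual frame lem}: the pairings $\phi_k(X_i)=\delta_{ki}$ and $\phi_k(Y_u)=0$ are constant, so only the Lie-bracket terms survive, and substituting the three bracket relations~\eqref{dual frame-lie bracket} yields
\begin{align*}
d\phi_k &= \sum_{l<m}\frac{\partial \pi_{lm}}{\partial x_k}\,\phi_l\wedge\phi_m - \sum_{l,u}\frac{\partial A_l^u}{\partial x_k}\,\phi_l\wedge dy_u \\
&\quad - \sum_{u<v}\left(\sum_l \pi^{kl}\left(\frac{\partial A_l^u}{\partial y_v} - \frac{\partial A_l^v}{\partial y_u} + \sum_m \Bigl(A_m^u\frac{\partial A_l^v}{\partial x_m} - A_m^v\frac{\partial A_l^u}{\partial x_m}\Bigr)\right)\right)dy_u\wedge dy_v.
\end{align*}
Applying $-\D_{\phi_i}\D_{\phi_j}$, i.e. $-\partial^2/\partial x_i\partial x_j$ on each coefficient, then reproduces exactly the expression claimed in~(b). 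The hard part will be the bookkeeping rather than any conceptual point: deriving the $[Y_u,Y_v]$ bracket (already the heaviest entry of~\eqref{dual frame-lie bracket}) and then carrying the wedge orderings and signs correctly through the two contravariant derivatives.
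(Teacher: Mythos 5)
Your proposal is correct and follows essentially the same route as the paper: part (b) is obtained exactly as in the text by combining $\M(\phi_i,\phi_j,\phi_k)=-\D_{\phi_i}\D_{\phi_j}d\phi_k$ with the expansion of $d\phi_k$ in the flat coframe coming from the bracket relations of Lemma \ref{dual frame lem}, and your $d\phi_k$ and signs agree with the paper's displayed formula. The only (immaterial) difference is in part (a), where the paper reads the vanishing directly off the definition \eqref{metacuvature def} together with $\{y_u,\sigma\}=\D_{dy_u}\sigma=0$, while you route it through \eqref{metacourbure formula}; both rest on the same fact that $\D_{dy_u}=0$ for an $\F^{reg}$-connection.
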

\begin{proof}
Part (a) is immediate from \eqref{metacuvature def} and \eqref{haw brackt on 00-01}.
\par For (b), on the one hand, we have by \eqref{metacourbure formula},
$$\M(\phi_i,\phi_j,\phi_k)=-\D_{\phi_i}\D_{\phi_j}d\phi_k\quad\mbox{for\;all}\;i,j,k\,.$$
On the other hand, using Lemma \ref{dual frame lem} gives
\begin{equation}\begin{split}
d\phi_i= & \sum_{j<k}\frac{\partial\pi_{jk}}{\partial x_i}\,\phi_j\wedge\phi_k
-\sum_{j,u}\frac{\partial A_j^u}{\partial x_i}\,\phi_j\wedge dy_u\\
& -\!\sum_{u<v,\,j}\pi^{ij}\biggl(\frac{\partial A_j^u}{\partial y_v}-\frac{\partial
A_j^v}{\partial y_u}+\sum_k A_k^u\frac{\partial A_j^v}{\partial x_k}-A_k^v\frac{\partial
A_j^u}{\partial x_k}\biggr)dy_u\wedge dy_v\,,
\end{split}\end{equation}
and the desired formula follows.
\end{proof}
Likewise, we get the following expression for the tensor $\T$.
\begin{thm}\label{loc exp for T thm}
\begin{enumerate}
\item[\rm (i)] For any $u=1,\ldots,d-2r$, $\,\T(dy_u,\cdot\,)=0$.
\item[\rm (ii)] For any $i,j,k=1,\ldots,2r$,
\begin{align*}
\T(\phi_i,\phi_j)= & -\sum_{k<l}\frac{\partial^2\,\pi_{kl}}{\partial x_i\partial x_j}
\,\phi_k\wedge\phi_l+\sum_{k,u}\frac{\partial^2\,A_k^u}{\partial x_i\partial
x_j}\,\phi_k\wedge dy_u\\
+\!\sum_{u<v,\,k}\frac{\partial}{\partial x_i} &
\biggl(\pi^{jk}\biggl(\frac{\partial A_k^u}{\partial y_v}-\frac{\partial
A_k^v}{\partial y_u}+\sum_l A_l^u\frac{\partial A_k^v}{\partial x_l}-A_l^v\frac{\partial A_k^u}
{\partial x_l}\biggr)\!\biggr)dy_u\wedge dy_v.
\end{align*}
\end{enumerate}
\end{thm}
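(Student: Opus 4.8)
The proof runs along the same lines as that of Theorem \ref{local exp for M thm}, but is one step shorter. The starting point is that, by Corollary \ref{flat coordn systm corol}, all the forms $\phi_1,\dots,\phi_{2r},dy_1,\dots,dy_{d-2r}$ making up ${\bf F}^*$ are $\D$-parallel, so the identity $\T_x(a,b)=-(\D_\alpha d\beta)(x)=-(\D_\beta d\alpha)(x)$ that follows \eqref{tens T} may be applied verbatim with these forms as arguments. For (i), taking $\alpha=dy_u$ and using $d(dy_u)=0$ gives $\T_x(dy_u(x),b)=-(\D_\beta d(dy_u))(x)=0$ for every $b$ (a $\D$-parallel $\beta$ with prescribed value at $x$ exists by Proposition \ref{flatness carct}), that is, $\T(dy_u,\cdot\,)=0$. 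For (ii) the same identity gives, for each point of the chart, the equality of $2$-forms $\T(\phi_i,\phi_j)=-\D_{\phi_i}d\phi_j$, and it remains only to evaluate the right-hand side.

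Two observations make this evaluation mechanical. First, $\anch(\phi_i)=\partial/\partial x_i$: this is immediate from $\phi_i=\varpi^{\mathcal{H}}(\partial/\partial x_i)$ together with the fact that $\anch\circ\varpi^{\mathcal{H}}$ is the identity on $T\S$, to which $\partial/\partial x_i$ belongs since the $x_i$ are leafwise coordinates. Second, $\D$ extends to $\efd$ as a degree-zero derivation for $\wedge$ with $\D_\alpha f=\anch(\alpha)(f)$ on functions (the extension already implicit in \eqref{bah-bouc eq}); since $\D_{\phi_i}\phi_k=\D_{\phi_i}dy_u=0$, the operator $\D_{\phi_i}$ applied to any form written in the coframe ${\bf F}^*$ simply replaces each scalar coefficient $f$ by $\anch(\phi_i)(f)=\partial f/\partial x_i$ and leaves the coframe monomials unchanged.

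It then suffices to substitute the expression for $d\phi_j$ obtained in the proof of Theorem \ref{local exp for M thm} (which itself comes from Lemma \ref{dual frame lem}): a combination of $\phi_k\wedge\phi_l$, $\phi_k\wedge dy_u$ and $dy_u\wedge dy_v$ with coefficients $\partial\pi_{kl}/\partial x_j$, $-\partial A_k^u/\partial x_j$ and $-\pi^{jk}(\cdots)$, respectively. Applying $-\D_{\phi_i}$ hits each coefficient with $-\partial/\partial x_i$, the two $x$-derivatives commuting to produce $\partial^2/\partial x_i\partial x_j$ in the first two sums, and one obtains precisely the three terms displayed in (ii). There is no genuine obstacle: the computation is essentially a transcription, and the only points that need care are the identification $\anch(\phi_i)=\partial/\partial x_i$ and the remark that $\D_{\phi_i}$ acts coefficient-wise in the flat coframe. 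As a consistency check, applying $\D_{\phi_k}$ to the formula in (ii) and invoking $\D\T=\M$ recovers the formula of Theorem \ref{local exp for M thm}(b) (up to the symmetry of $\T$ in its contravariant slots).
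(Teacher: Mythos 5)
Your proof is correct and takes essentially the same route the paper intends: the paper offers no separate argument for this theorem (it merely says ``Likewise''), the implicit proof being exactly yours, namely applying $\T(\phi_i,\phi_j)=-\D_{\phi_i}d\phi_j$ to the expression for $d\phi_j$ computed in the proof of Theorem~\ref{local exp for M thm}. Your two supporting observations --- that $\anch(\phi_i)=\partial/\partial x_i$ and that $\D_{\phi_i}$ acts coefficient-wise on forms written in the flat coframe ${\bf F}^*$ --- are both accurate and are precisely what makes the substitution legitimate.
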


\subsection{The symplectic case}
If the Poisson tensor $\pi$ is invertible, then the flat and torsion-free contravariant connection $\D$ is an $\F$-connection\footnote{Actually, this is true for any contravariant connection on $M$ since $\Ker\anch=\{0\}$.}, and is related to a flat, torsion-free, covariant connection $\nabla$ on $M$ via $\anch(\D_\alpha\beta)=\nabla_{\anch(\alpha)}\anch(\beta)$. In that case, a flat coordinate system is one with respect to whom $\nabla$ is given trivially by partial derivatives (Remark \ref{flat coord systm rmk}).
\par Since the kernel of the anchor map reduces to zero, the metacurvature vanishes if and only if $\pi$ is quadratic in the affine structure defined by $\nabla$ (Theorem \ref{local exp for M thm}), which is precisely the conclusion of \cite[Theorem 2.4]{haw2}.
\par Likewise, the tensor $\T$ vanishes if and only if the components of $\pi$ w.r.t. any flat coordinate system are at most of degree one (Theorem \ref{loc exp for T thm}).
\begin{exple}
If $\D$ is a flat, torsion-free, Poisson connection on a Poisson manifold $(M,\pi)$ with $\pi$ invertible, then $\T$ vanishes identically. In fact, the condition  $\D\pi=0$ is equivalent to saying that the components of $\pi$ with respect to any flat coordinate system are constant.
\end{exple}
\begin{exple}
Let $G$ be a Lie group with Lie algebra $\g$, and let $r\in\wedge^2\g$ be a solution of the classical Yang-Baxter equation. For any tensor $\tau$ on $\g$, denote by $\tau^+$ the corresponding left-invariant tensor field on $G$. Following \cite{Bouc2}, the formula $$\D^r_{a^+}\,\!b^+=-(\ad^*_{r(a)}b)^+,$$ where $a,b\in\g^*$, defines a left-invariant, flat, torsion-free, $\F$-connection $\D^r$ on $(G,r^+)$ with vanishing $\T$. It is well known (see, e.g., \cite{Chu}) that if $r$ is invertible, then the left-invariant symplectic form $\omega^+$ inverse of $r^+$ defines a left-invariant, flat, torsion-free connection $\nabla$ on $G$ via $$\omega^+(\nabla_{u^+}v^+,w^+)=-\omega^+(v^+,[u^+,w^+]),\quad u,v,w\in\g\,.$$
One then checks easily that $\D^r$ and $\nabla$ are related by: $\!r^+_\#(\D^r_{a^+}\,\!b^+)\!=\!\nabla_{r(a)^+}r(b)^+\!$. We thus recover a result of the first author and Medina (cf. \cite[Theorem 1.1-(1)]{Bouc-Med}) which states that if $r$ is invertible, then $r^+$ is polynomial of degree at most 1 with respect to the affine structure defined by $\nabla$.
\end{exple}

\subsection{The Riemannian case}
Let $\D$ be the metric contravariant connection associated to a Poisson tensor $\pi$ and a Riemannian metric $g$ on a manifold $M$. Thanks to the metric $g$, the cotangent bundle splits orthogonally into
$$T^*M=\Ker\anch\oplus(\Ker\anch)^{\perp}.$$
\begin{lem}\label{CLCC orth ker}
Let $U\subseteq M$ be an open set on which the rank of $\pi$ is constant. Assume that $\D$ is an $\F$-connection on $U$. Then $(\Ker\anch_{|_U})^\perp$ is stable $\mbox{under}\;\,\D$.
\end{lem}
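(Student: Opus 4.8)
The plan is to work with arbitrary local sections and exploit the Koszul formula \eqref{Koszul formula} together with the already-established Lemma \ref{ker stability lem}. Fix $\alpha\in\Omega^1(U)$ arbitrary and $\beta\in\Gamma\bigl((\Ker\anch_{|_U})^\perp\bigr)$; I must show $\D_\alpha\beta$ lies again in $(\Ker\anch_{|_U})^\perp$, i.e. that $\langle\D_\alpha\beta,\gamma\rangle=0$ for every local section $\gamma$ of $\Ker\anch_{|_U}$. First I would record the following consequences of $\gamma\in\Ker\anch$: by Lemma \ref{ker stability lem}, $\D_\alpha\gamma=L_{\anch(\alpha)}\gamma$ (which need not be in $\Ker\anch$ a priori — wait, it is, by the same lemma), and also $\anch(\gamma)=0$, so the term $\anch(\gamma)\!\cdot\!\langle\alpha,\beta\rangle$ in \eqref{Koszul formula} drops out. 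Moreover $[\beta,\gamma]_\pi$ and $[\gamma,\alpha]_\pi$ simplify: since $\D$ is torsion-free and $\anch(\gamma)=0$, one has $[\phi,\gamma]_\pi=\D_\phi\gamma-\D_\gamma\phi=\D_\phi\gamma$ using that $\D$ is an $\F$-connection (so $\D_\gamma=0$); hence $[\beta,\gamma]_\pi=\D_\beta\gamma=L_{\anch(\beta)}\gamma$ and $[\gamma,\alpha]_\pi=-[\alpha,\gamma]_\pi=-\D_\alpha\gamma=-L_{\anch(\alpha)}\gamma$.

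With these substitutions, \eqref{Koszul formula} applied to the triple $(\alpha,\beta,\gamma)$ becomes
\begin{equation*}
\langle\D_\alpha\beta,\gamma\rangle=\tfrac12\bigl\{\anch(\alpha)\!\cdot\!\langle\beta,\gamma\rangle+\anch(\beta)\!\cdot\!\langle\alpha,\gamma\rangle+\langle[\alpha,\beta]_\pi,\gamma\rangle-\langle\D_\beta\gamma,\alpha\rangle-\langle L_{\anch(\alpha)}\gamma,\beta\rangle\bigr\}.
\end{equation*}
Now I would use $\langle\beta,\gamma\rangle=0$ and differentiate: $\anch(\alpha)\!\cdot\!\langle\beta,\gamma\rangle=\langle\D_\alpha\beta,\gamma\rangle+\langle\beta,\D_\alpha\gamma\rangle=\langle\D_\alpha\beta,\gamma\rangle+\langle\beta,L_{\anch(\alpha)}\gamma\rangle$, which cancels the $-\langle L_{\anch(\alpha)}\gamma,\beta\rangle$ term and produces $\langle\D_\alpha\beta,\gamma\rangle$ on the right. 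Similarly $\anch(\beta)\!\cdot\!\langle\alpha,\gamma\rangle=\langle\D_\beta\alpha,\gamma\rangle+\langle\alpha,\D_\beta\gamma\rangle$, of which the last term cancels $-\langle\D_\beta\gamma,\alpha\rangle$. Finally $\langle[\alpha,\beta]_\pi,\gamma\rangle=\langle\D_\alpha\beta-\D_\beta\alpha,\gamma\rangle$ by torsion-freeness. Collecting everything, the right-hand side reduces to $\tfrac12\bigl(\langle\D_\alpha\beta,\gamma\rangle+\langle\D_\beta\alpha,\gamma\rangle+\langle\D_\alpha\beta,\gamma\rangle-\langle\D_\beta\alpha,\gamma\rangle\bigr)=\langle\D_\alpha\beta,\gamma\rangle$, an identity — so this direct substitution only confirms consistency and does not by itself pin down the value.

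The honest route, then, is to observe that $\langle\D_\alpha\beta,\gamma\rangle+\langle\beta,\D_\alpha\gamma\rangle=\anch(\alpha)\!\cdot\!\langle\beta,\gamma\rangle=0$ since $\langle\beta,\gamma\rangle\equiv0$ on $U$, and that $\D_\alpha\gamma=L_{\anch(\alpha)}\gamma\in\Gamma(\Ker\anch_{|_U})$ by Lemma \ref{ker stability lem}; hence $\langle\beta,\D_\alpha\gamma\rangle=0$ because $\beta\in(\Ker\anch_{|_U})^\perp$, and therefore $\langle\D_\alpha\beta,\gamma\rangle=0$ for all $\gamma\in\Gamma(\Ker\anch_{|_U})$, i.e. $\D_\alpha\beta\in(\Ker\anch_{|_U})^\perp$. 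I expect the main (really the only) subtlety to be the bookkeeping of which bracket terms vanish; the conceptual content is entirely that metric-compatibility of $\D$ turns stability of $\Ker\anch$ (Lemma \ref{ker stability lem}) into stability of its orthogonal complement, exactly as for the ordinary Levi-Civita connection and a parallel subbundle.
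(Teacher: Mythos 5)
Your final argument is correct and is exactly the one the paper leaves implicit (the lemma is stated there without proof): metric compatibility gives $\langle\D_\alpha\beta,\gamma\rangle+\langle\beta,\D_\alpha\gamma\rangle=\anch(\alpha)\!\cdot\!\langle\beta,\gamma\rangle=0$ for any section $\gamma$ of $\Ker\anch$, and Lemma \ref{ker stability lem} puts $\D_\alpha\gamma$ back into $\Ker\anch$, so the second term vanishes and $\D_\alpha\beta\perp\Ker\anch$. The preliminary detour through the Koszul formula \eqref{Koszul formula} is a harmless dead end that you yourself correctly diagnose as tautological; only your last paragraph is needed, and it is complete.
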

Thus if $\D$ is flat and an $\F^{reg}$-connection, then by Corollary \ref{flat coordn systm corol} there exists around any $x\in M^{reg}$ an $\S$-foliated chart with leafwise coordinates $\{x_i\}_{i=1}^{2r}$ and transverse coordinates $\{y_u\}_{u=1}^{d-2r}$ such that $\left\{\phi_i:=\varpi^{\perp}(\partial/\partial x_i)\,;\,dy_u\right\}$
is a flat coframe of $M$ near $x$, where we have denoted by $\varpi^{\perp}:T\S\rightarrow(\Ker\anch)^{\perp}$ the inverse of $\anch:(\Ker\anch)^{\perp}\rightarrow T\S$. In this case, the functions $A_i^u$ defined by \eqref{structure functions} can be computed by means of the metric; indeed, using \eqref{phi expression} and the fact that $\langle\phi_i,dy_u\rangle=0$, one has $-A_i^u=\sum_v g_{iv}g^{uv}$ where $g_{iv}=\langle dx_i,dy_v\rangle$ and $(g^{uv})$ is the inverse matrix of the one whose coefficients are $g_{uv}=\langle dy_u,dy_v\rangle$.
\section{Proof of Theorem \ref{main result}}
Let $(x_i,y_u)$, with $i=1,\ldots,2r$ and $u=1,\ldots,d-2r$, be a flat coordinate system around $x_0$, choose $\mathcal{H}$ as in lemma \ref{flatness spliting lem}, and let ${\bf F}^*=\{\phi_i,dy_u\}$ be the corresponding flat coframe and $\{X_i,Y_u\}$ its dual frame. We shall construct a family of vector fields $\{Z_1,\ldots,Z_{2r}\}$ on a neighborhood $U$ of $x_0$ which span $T\S$ and commute with the $X_i$'s and the $Y_u$'s. In that case,
\begin{itemize}
  \item The family $\{Z_1,\ldots,Z_{2r}\}$ will form a 2r-dimensional reel Lie algebra $\g$, since by the Jacobi identity $$[[Z_i,Z_j],X_l]=[[Z_i,Z_j],Y_u]=0\quad\forall\,i,j,l,\,\forall\,u,$$ so that $[Z_i,Z_j]=\sum_{k}c_{ij}^k\,Z_k$ with $c_{ij}^k$ being constant; it is then clear that $\g$ acts freely on $U$.
  \item The Poisson tensor $\pi$ will be expressed as $$\pi=\frac{1}{2}\,\sum_{i,j}
      a_{ij}\,Z_i\wedge Z_j$$ where the matrix $(a_{ij})_{1\leq i,j\leq 2r}$ is constant and invertible: since the  $X_i$'s and the $Y_u$'s are Poisson (Lemma \ref{dual frame lem}), then writing $\,\pi=\sum_{i<j}a_{ij}\,Z_i\wedge Z_j$ where $a_{ij}\in\cinf(U)$, we get $X_k(a_{ij})=Y_u(a_{ij})=0$.
  \item The connection $\D$ will be given on $U$ by $$\D_\alpha\beta=\sum_{i,j}a_{ij}\alpha(Z_i)\,\Li_{Z_j}\beta\,.$$ In fact, this is true for any $\beta\in{\bf F}^*$ since $\,\Li_{Z_i}\phi_j=\Li_{Z_i}dy_u=0\,$, and $\D_\alpha\beta-\sum_{i,j}a_{ij}\alpha(Z_i)\,\Li_{Z_j}\beta$ is tensorial in $\beta$ as $\anch(\alpha)=\sum_{i,j}a_{ij}\alpha(Z_i)Z_j$.
\end{itemize}

\sskp\nind We shall proceed in two steps. We first construct a family of vector fields which span $T\S$ and commute with the $X_i$'s, and then construct from this the desired family.\\
To start, observe that by virtue of Theorem \ref{} and Lemme \ref{dual frame lem} we have
$$[X_i,X_j]=\sum_{k=1}^{2r}\lambda_{ij}^k\,X_k\,,\quad[X_i,Y_u]=\sum_{j=1}^{2r}\mu_{iu}^j\,X_j
\,,\quad[Y_u,Y_v]=\sum_{i=1}^{2r}\nu_{uv}^i\,X_i\,,$$
where $\lambda_{ij}^k,\,\mu_{iu}^j,\,\nu_{uv}^i$ are Casimir functions. Let $\mathcal{T}\subseteq M$ be a smooth transversal to $T\S$ intersecting $x_0$; this is parametrized by the $y_u$'s. Fixing $y\in \mathcal{T}$,
the restrictions $X_1^y,\ldots,X_{2r}^y$ of $X_1,\ldots,X_{2r}$ to the symplectic leaf $\S_y$ passing through $y$ form a Lie algebra $\g_y$ which acts freely and transitively on $\S_y$. Therefore, according to \cite{Michor}, there exists a free transitive Lie algebra anti-homomorphism $\hat{\Gamma}_{y}:\g_y\rightarrow\mathfrak{X}^1(\S_y)$ whose image is $$\hat{\Gamma}_y(\g_y)=\bigl\{T\in\mathfrak{X}^1(\S_y):\,[T,X_i^y]=0\;\;\forall\,i=1,\ldots,2r\bigr\},$$ and such that $\hat{\Gamma}_y(X_i^y)(y)=X_i(y)$ for all $i$. Setting for any $i$,
$$T_i(z):=\hat{\Gamma}_y(X_i^y)(z),\quad z\in\S_y$$
and varying $y$ along $\mathcal{T}$, we get a family of linearly independent vector fields $\{T_1,\ldots,T_{2r}\}$ which are tangent to $T\S$ and verify
$$[T_i,X_j]=0\quad\mbox{for\;all}\;i,j,$$
and such that $T_i(y)=X_i(y)$ for all $i$ and all $y\in \mathcal{T}$. Note that $T_1,\ldots,T_{2r}$ are smooth since the solutions of the system $$[T,X_i]=0\,,\quad\,i=1,\ldots,2r$$
depend smoothly on the parameter $y\in \mathcal{T}$ and the initial values along $\mathcal{T}$. It is also worth noting that since the $\mu_{iu}^j$'s are Casimir, we have
$$[X_i,[T_j,Y_u]]=0\quad\mbox{for\;all}\;i,j\;\mbox{and\;all}\;u\,,$$
so that
$$[T_i,Y_u]=\sum_{j=1}^{2r}\gamma_{iu}^j\,T_j\,,$$ where $\gamma_{iu}^j$ are Casimir functions; in addition, since the $\nu_{uv}^i$'s are Casimir, we have
$$[T_i,[Y_u,Y_v]]=0\quad\mbox{for\;all}\;i\;\mbox{and\;all}\;u,v$$
implying
$$\frac{\partial\gamma_{ju}^i}{\partial y_v}-\frac{\partial\gamma_{jv}^i}{\partial
y_u}+\sum_{k=1}^{2r}\gamma_{ku}^i\gamma_{jv}^k-\gamma_{kv}^i\gamma_{ju}^k=0\eqno{(*)}$$
for all $i,j$ and all $u,v$.

\noindent Now we would like to find an invertible matrix $\xi=(\xi_{ij})_{1\leq i,j\leq 2r}$ where $\xi_{ij}$ are Casimir functions such that the vector fields
$$Z_i:=\sum_{j=1}^{2r}\xi_{ji}\,T_j\,,\quad i=1,\ldots,2r$$
verify $$[Z_i,Y_u]=0\quad\mbox{for\;all}\;i\;\mbox{and\;all}\;u\,.$$
If such a matrix exists, the family $\{Z_1,\ldots,Z_{2r}\}$ is clearly the desired one. Since the functions $\xi_{ij}$ are searched to be Casimir, the condition for the $Z_i$'s to commute with the $Y_u$'s can be rewritten as $$\frac{\partial\xi_{ji}}{\partial
y_u}=\sum_{k=1}^{2r}\gamma_{ku}^j\,\xi_{ki}\quad\forall\,i,j,\forall\,u\,,$$
or in matrix notation
$$\frac{\partial}{\partial y_u}\,\xi_{i}=\Gamma_u\,\xi_{i}\,,$$
where $\xi_1,\ldots,\xi_{2r}$ are the colon row of $\xi$ and $\Gamma_u:=(\gamma_{ju}^i)_{1\leq i,j\leq 2r}$. So we need to solve this system. Again, since the functions $\xi_{ij}$ are searched to be Casimir, we can solve it on $\mathcal{T}$. According to  Frobenius's Theorem, this system has solutions if and only if the following integrability condition
\begin{equation*}
\Gamma_u \Gamma_v+\frac{\partial}{\partial y_v}\Gamma_u=\Gamma_v
\Gamma_u+\frac{\partial}{\partial y_u}\Gamma_v
\end{equation*}
holds for all $u,v$, which is nothing else but $(*)$. It then suffices to take $\xi_{ij}(x_0)=\delta_{ij}$ as initial conditions to conclude. \\
Finally, if $\D$ is the metric contravariant connection with respect to $\pi$ and a Riemannian metric $g$, we choose $\mathcal{H}=(\Ker\anch)^\perp$. In this case, we have
$$\Li_{Z_i}g\,(\phi_j,\phi_k)=\Li_{Z_i}g\,(\phi_j,dy_u)=\Li_{Z_i}g\,(dy_u,dy_v)=0$$
since $\,\Li_{Z_i}\phi_j=\Li_{Z_i}dy_u=0$ and since $\,g(\phi_i,\phi_j)$ and $g(dy_u,dy_v)$ are Casimir functions. This shows that the vector fields $Z_i$ are Killing. \hfill{}$\Box$





\bibliographystyle{model1a-num-names}
\bibliography{<your-bib-database>}



\end{document}